\theoremstyle{plain}
\newtheorem{defn}{Definition}
\newtheorem{theorem}{Theorem}
\newtheorem{lemma}{Lemma}
\newtheorem{remark}{Remark}
\newcommand{\T}{\ensuremath{\mathscr{T}}}
\newcommand{\p}{\ensuremath{\mathscr{P}}}
\newcommand{\F}{\ensuremath{\mathscr{F}}}
\begin{document}
\title{Connected door spaces and topological solutions of equations}

\author{Jianfeng Wu\footnotemark[1],
\and Chunli Wang\footnotemark[2],
\and Dong Zhang\footnotemark[3]}
\renewcommand{\thefootnote}{\fnsymbol{footnote}}
\footnotetext[1]{School of Mathematics and Statistics, Zhengzhou University, Zhengzhou, Henan, China. E-mail: wuj@zzu.edu.cn}
\footnotetext[2]{Tianjin Medical Laboratory, BGI, Tianjin, China.  E-mail: wangchunlimath@163.com}
\footnotetext[3]{School of Mathematical Sciences, Peking University, Beijing,  China.  E-mail: 13699289001@163.com}
\date{}

\maketitle
\begin{abstract}
The connected door space is an enigmatic topological space in which every proper nonempty subset is either open or closed, but not both. This paper provides an elementary proof of the classification theorem of connected door spaces. More importantly, we show that connected door topologies can be viewed as solutions of the valuation $f(A)+f(B)=f(A\cup B)+f(A\cap B)$ and the equation  $f(A)+f(B)=f(A\cup B)$, respectively. In addition, some special solutions, which can be regarded as a union of connected door spaces, are provided.
\end{abstract}

A topological space is called a {\sl door space} if every subset is either open or closed (or both).
In his classic textbook \cite{kelley}, John L. Kelly introduced this concept. He merely gave two easy facts about Hausdorff door spaces there. Beyond a few literatures on various types of door spaces, such as \cite{BDE2011,DGKM1998,D1995,DLT2013,OH2004}, we find the {\sl connected door space} has certain interesting connections with functions. For a connected door space $(X,\T)$, we may define a map $f:\p(X)\setminus\{\varnothing,X\}\to\{0,1\}$ by $f(A)=1$ if $A\in\T$, and $f(A)=0$ if $A^c\in\T$, where $A^c=X\setminus A$, and $\p(X)$ is
the {\sl power set} of $X$, i.e. $\p(X)=\{A:A\subset X\}$. For most topological spaces, such an $f$ fails to be a map. Since two-valued maps play important roles in mathematics,
especially in mathematical logic, we believe that more attentions should be paid on connected door spaces, or even door spaces.

In 1973, B. Muckenhoupt and V. Williams \cite{MW1973} showed that the number of connected door topologies on an infinite set $X$ with $|X|=k$ is $2^{2^k}$, where $|X|$ is the
cardinality of $X$.
In 1987, S. D. McCartan \cite{M1987} classified door spaces,  and showed the following classification theorem of connected door spaces.

\begin{theorem}
 \label{thm:classes-connected-door}
There are exactly three types of connected door spaces $(X,\T)$:
\begin{enumerate}[(1)]
\item $\T=\{A\subset X:a\not\in A\}\cup \{X\}$ for some $a\in X$ ({\sl excluded point topology});
\item $\T=\{A\subset X:a\in A\}\cup \{\varnothing\}$ for some $a\in X$ ({\sl included point topology});
\item $\T\setminus\{\varnothing\}$ is a free ultrafilter on $X$ (and $X$ is necessarily infinite in this case).
\end{enumerate}
\end{theorem}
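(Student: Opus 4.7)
The plan is to verify the easy direction first, then establish the hard converse through a case analysis driven by the set of ``universally closed'' points.

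For the easy direction, I check directly that each of (1), (2), (3) is a connected door space. In (1), any subset either avoids $a$ (so is open) or contains $a$ (so its complement avoids $a$ and is open, making the set closed); no proper subset can be clopen because $a$ must lie in exactly one of $A,A^c$. Case (2) is dual. In (3), the filter axioms yield the topology axioms, the ultrafilter dichotomy yields the door property, and a proper nonempty clopen $A$ would force both $A,A^c\in\F$ and hence $\varnothing\in\F$, which is impossible.

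For the hard direction, let $(X,\T)$ be a connected door space and set
\[
W:=\{x\in X:x\text{ lies in no proper nonempty open set}\}=\bigcap\{C:C\neq\varnothing,\,C\text{ closed}\}.
\]
The first lemma is $|W|\le 1$: if $a_1,a_2\in W$ were distinct, then $\{a_1\}$ cannot be open (else it would be a proper nonempty open containing $a_1$), so by the door property it is closed; then $X\setminus\{a_1\}$ is a proper nonempty open set containing $a_2\in W$, contradiction. If $W=\{a\}$, then $\T=\{U:a\notin U\}\cup\{X\}$: proper nonempty opens avoid $a$ by definition, and conversely any nonempty $U\subsetneq X$ with $a\notin U$ must be open, since otherwise $U^c\in\T$ would be a proper nonempty open containing $a$. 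This yields case (1).

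Assume now $W=\varnothing$. I claim $\mathcal{B}:=\T\setminus\{\varnothing\}$ is an ultrafilter, giving case (2) when principal and case (3) when free (the latter forcing $X$ to be infinite). The ultrafilter dichotomy follows directly from the door property together with connectedness, and upward closure follows from dichotomy plus closure under binary intersection by a standard argument. The essential obstacle is therefore to show that $\mathcal{B}$ is closed under binary intersection when $W=\varnothing$.

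To overcome this, assume for contradiction $A,B\in\mathcal{B}$ with $A\cap B=\varnothing$ and set $A^*:=X\setminus\overline{B}$ and $B^*:=X\setminus\overline{A^*}$; these are the maximal opens disjoint from $B$ and from $A^*$ respectively. Then $A\subset A^*$, $B\subset B^*$, $A^*\cap B^*=\varnothing$, and $\overline{B}\cup\overline{A^*}=X$. Setting $C^*:=X\setminus(A^*\cup B^*)=\overline{B}\cap\overline{A^*}$, the case $C^*=\varnothing$ gives a separation $X=A^*\sqcup B^*$, contradicting connectedness; hence $C^*\neq\varnothing$. Pick $x\in C^*$ and, using $W=\varnothing$, a proper nonempty open $U\ni x$. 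The key observation is that $U\cap\overline{B}$ cannot be open, for then it would be an open set contained in $\overline{B}$, hence disjoint from $A^*$, hence contained in the maximal such open $B^*$; but $x\in U\cap\overline{B}\cap C^*$ while $B^*\cap C^*=\varnothing$. By the door property, $U\cap\overline{B}$ is closed, and symmetrically $U\cap\overline{A^*}$ is closed (if it were open, it would lie in the maximal open disjoint from $B^*$, namely $A^*$, contradicting $x\in U\cap\overline{A^*}\cap C^*$). Since $\overline{B}\cup\overline{A^*}=X$,
\[
U=(U\cap\overline{B})\cup(U\cap\overline{A^*})
\]
is a finite union of closed sets, hence closed. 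Thus $U$ is clopen, proper, and nonempty, contradicting connectedness.
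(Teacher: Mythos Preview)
Your proof is correct and takes a genuinely different route from the paper's. The paper proceeds via two combinatorial lemmas: first, that if $A$ is open, $B$ is closed, and $A\cap B=\varnothing$, then $A\cup C$ stays open and $B\cup C$ stays closed for any $C$ disjoint from both; second (the ``OCC'' condition), that a connected door space cannot contain four pairwise disjoint nonempty subsets with two open and two closed. The paper then splits into cases according to how many \emph{singletons} are open versus closed, using OCC to rule out the bad configurations in each case.

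Your organizing device is instead the set $W$ of points missed by every proper nonempty open set. This immediately isolates the excluded point topology as the case $|W|=1$, and collapses the remaining two types into the single assertion that $\T\setminus\{\varnothing\}$ is an ultrafilter when $W=\varnothing$. Your key lemma---that two nonempty opens must meet when $W=\varnothing$---is proved by a closure/maximal-open argument (passing to $A^*=X\setminus\overline{B}$ and $B^*=X\setminus\overline{A^*}$ and exhibiting any proper open neighborhood of a point in the residual set $C^*$ as a union of two closed sets), rather than by the paper's four-set combinatorics. The paper's approach buys an independently interesting structural lemma (OCC) that applies beyond door spaces; your approach is more directly tailored to the classification and handles cases~(2) and~(3) uniformly as the principal/free dichotomy for a single ultrafilter, which is arguably cleaner. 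One small remark: in your parenthetical for the symmetric step, the claim that $A^*$ is the maximal open disjoint from $B^*$ is correct but not entirely obvious; it follows because $A^*\subset X\setminus\overline{B^*}\subset X\setminus\overline{B}=A^*$, using $B\subset B^*$.
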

We first give an elementary proof of Theorem    \ref{thm:classes-connected-door} via several auxiliary lemmas which also possess independent interest (see Section \ref{sec:1}). Then, we find that connected door topologies could be induced respectively (see Theorems \ref{thm:two-valued-add} and \ref{thm:two-valued} for details) by the   
set equations:
\begin{equation}\label{eq:additivity}
f(A)+f(B)=f(A\cup B)
\end{equation}
and
\begin{equation}\label{eq}
f(A)+f(B)=f(A\cup B)+f(A\cap B).
\end{equation}

\begin{theorem}\label{thm:two-valued-add}
Assume that $X$ has at least three points and $z_1\ne z_2$ are two complex numbers. Let $f:\p(X)\setminus\{\varnothing\}\to \{z_1,z_2\}$ be a surjection  satisfying \eqref{eq:additivity} for any $A,B\in\p(X)\setminus\{\varnothing\}$ with $A\cap B=\varnothing$. Then $0\in \{z_1,z_2\}$, $f^{-1}(f(X))$ is an ultrafilter, and $\{\varnothing\}\cup f^{-1}(f(X))$ is a connected door space.
\end{theorem}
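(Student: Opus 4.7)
The plan is to proceed in three stages: first extract $0 \in \{z_1, z_2\}$ from a singleton analysis, then verify that $\F := f^{-1}(f(X))$ is an ultrafilter, and finally check the topology/door/connectedness properties of $\{\varnothing\} \cup \F$.

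For the first stage, I would pick three distinct points $x, y, w \in X$ (available because $|X| \geq 3$) and set $a = f(\{x\})$, $b = f(\{y\})$, $c = f(\{w\})$, each of them in $\{z_1, z_2\}$. Because these singletons are pairwise disjoint, repeated application of \eqref{eq:additivity} forces every partial sum $a+b$, $b+c$, $a+c$, $a+b+c$ to lie in $\{z_1, z_2\}$ as well. A short case analysis -- splitting on whether all three singletons share a common value $v$ (and then comparing $2v$ and $3v$ with the two available values) or whether both $z_1$ and $z_2$ already appear among the singletons (so that $z_1 + z_2 \in \{z_1, z_2\}$) -- shows that the only way to avoid a contradiction with $z_1 \neq z_2$ is that $0 \in \{z_1, z_2\}$. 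After this I may assume without loss of generality $z_1 = 0$, so that $f$ takes values in $\{0, z_2\}$ with $z_2 \neq 0$.

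For the second stage, I would first pin down $f(X)$: decomposing $X = A \cup A^c$ for any proper nonempty $A$, the identity $f(X) = f(A) + f(A^c)$ together with $-z_2 \notin \{0, z_2\}$ rules out $f(X) = 0$ (which would force $f \equiv 0$, contradicting surjectivity), giving $f(X) = z_2$ and $\F = f^{-1}(z_2)$. Upward closure of $\F$ comes from writing $B = A \cup (B \setminus A)$ for $A \in \F$ and $A \subsetneq B$, and exploiting the fact that $2z_2 \notin \{0, z_2\}$ to force $f(B \setminus A) = 0$ and $f(B) = z_2$; as an immediate consequence no two elements of $\F$ can be disjoint. Intersection closure then follows from the decomposition $A = (A \cap B) \cup (A \setminus B)$ combined with the no-disjointness observation, while the ultrafilter alternative $A \in \F$ or $A^c \in \F$ falls out directly from $f(A) + f(A^c) = z_2$.

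For the final stage, closure of $\T := \{\varnothing\} \cup \F$ under arbitrary unions follows from upward closure, and closure under finite intersections from the filter property, so $\T$ is a topology; the door property is immediate because any $A \subseteq X$ is either empty, all of $X$, in $\F$, or has its complement in $\F$; connectedness holds because a proper clopen set would place both $A$ and $A^c$ in $\F$, forcing $\varnothing \in \F$. The main obstacle I anticipate is the singleton analysis of the first stage: one has to enumerate which of the sums $2v$, $z_1 + z_2$, $3v$ may coincide with $z_1$ or $z_2$ in order to pin $0$ down as one of the two values, and this is precisely the step where the hypothesis $|X| \geq 3$ is indispensable (for $|X|=2$ the map $f(\{x\})=1$, $f(\{y\})=1$, $f(X)=2$ shows the conclusion can genuinely fail). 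Once $0 \in \{z_1, z_2\}$ is secured, everything reduces to routine bookkeeping with the two forbidden equalities $2z_2 \notin \{0, z_2\}$ and $-z_2 \notin \{0, z_2\}$.
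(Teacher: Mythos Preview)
Your proposal is correct and follows essentially the same three-step route as the paper: a singleton argument to force $0\in\{z_1,z_2\}$, then the filter/ultrafilter verification for $f^{-1}(f(X))$ using the forbidden value $2z$ (for $z$ the nonzero value), and finally the passage to a connected door topology. The only cosmetic differences are that the paper compresses your first-stage case split into the single observation that $f(\{a\}),f(\{a,b\}),f(\{a,b,c\})$ would give three distinct values when $0\notin\{z_1,z_2\}$, and that the paper obtains $f(X)=z$ as a consequence of upward closure rather than establishing it up front as you do; your explicit verification of the topology, door, and connectedness properties in stage three is more detailed than the paper, which stops once the ultrafilter property is shown.
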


Theorems \ref{thm:three-valued} and \ref{thm:three-valued-add} provide all topological solutions of the valuation \eqref{eq} and the equation \eqref{eq:additivity} in more complex setting.  Together with Theorem \ref{thm:two-valued-add},  connected door spaces could be viewed as some solutions to \eqref{eq:additivity} and \eqref{eq}. Proofs and detailed discussions are presented in  Section \ref{sec:2}.
These results suggest that certain topologies might be solutions of some set equations, which open a `door' to study topologies through equations.

\begin{theorem}\label{thm:three-valued}
For a topology $\T$ on $X$, there exists a surjection
 $f:\p(X)\to \{-1,0,1\}$ satisfying \eqref{eq} such that $\T=\{\varnothing,X\}\cup f^{-1}(1)$ or $\T=\{\varnothing,X\}\cup f^{-1}(-1)$, if and only if $\T$ is one of the following form:
  \begin{enumerate}[{Form} 1.]
  \item There exist $a\in X$ and an ultrafilter $\F'$ on $X\setminus\{a\}$ such that $\T=\{\varnothing,X\}\cup \F'$ or $\T=\{\varnothing,X\}\cup\{\{a\}\cup F:F\in \F'\}$;
  \item There exist $A\subset X$ and two free ultrafilters $\F'$ and $\F''$ on $A$ and $X\setminus A$, respectively, such that $\T=\{\varnothing,X\}\cup \{F'\cup F'':F'\in \F', F''\in\F''\}$;
  \item  $\T=\{\varnothing,X\}\cup\p(X\setminus\{a,b\})$ where $a\ne b\in X$.
  \end{enumerate}
\end{theorem}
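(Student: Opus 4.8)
The plan is to read the structure of $f$ off from the modularity built into \eqref{eq}, and then let the requirement that $\T$ be a topology select among the possibilities. Two reductions come first. Replacing $f$ by $-f$ (again a surjective solution of \eqref{eq}) interchanges $f^{-1}(1)$ and $f^{-1}(-1)$, so I may assume throughout that $\T=\{\varnothing,X\}\cup f^{-1}(1)$. Applying \eqref{eq} to $A$ and $A^c$ gives $f(A)+f(A^c)=f(X)+f(\varnothing)$ for every $A$; surjectivity forces this constant to be $0$, so with $c_0:=f(\varnothing)$ we have $f(X)=-c_0$ and $c_0\in\{-1,0,1\}$. Writing $g:=f-c_0$, equation \eqref{eq} says $g$ is additive on disjoint unions, hence $g(A)+g(A^c)=g(X)$ and $g(A)=\sum_{x\in A}g(\{x\})$ on finite $A$; a direct computation shows each of $f^{-1}(1)$ and $f^{-1}(-1)$ is closed under finite unions and finite intersections (the other possibilities for $f(A\cap B)$ or $f(A\cup B)$ leave $\{-1,0,1\}$). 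The proof then divides into the three cases $c_0=0,-1,1$.

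For $c_0=0$ the measure $g=f$ is odd, $g(A^c)=-g(A)$. Here I would set $U=\bigcup f^{-1}(1)$, an open set, and the crucial point is that $U\ne X$. If $U=X$ and some point $p$ had $g(\{p\})=0$, then every $x\ne p$ would lie in a value-$1$ set avoiding $p$ (shrink any value-$1$ set containing $x$ by deleting $p$, which preserves the value), so $X\setminus\{p\}$ would be a union of open sets, yet $g(X\setminus\{p\})=0$ keeps it out of $\T$ — a contradiction; and since at most one point can carry mass $+1$ and at most one mass $-1$ (two equal nonzero masses already escape $\{-1,0,1\}$), the alternative that no point has mass $0$ forces $|X|\le 2$, where $U\ne X$ is checked by hand. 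Thus $U$ is a proper nonempty open set, so $U\in f^{-1}(1)$ and $U$ is the largest value-$1$ set. Maximality then gives $g(\{x\})=-1$ for each $x\notin U$, and two such points would sum to $-2$; hence $X\setminus U=\{a\}$ is a single point with $g(\{a\})=-1$. Finally, on $Y=X\setminus\{a\}=U$ the identity $g(S)+g(Y\setminus S)=g(U)=1$ is the ultrafilter dichotomy, and together with intersection-closure it makes $f^{-1}(1)$ an ultrafilter $\F'$ on $Y$, so $\T$ is Form 1 (first option).

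For $c_0=-1$ the function $g=f+1$ is a nonnegative integer valuation with $g(X)=2$; surjectivity yields a set $P$ with $g(P)=g(P^c)=1$, and restricting $g$ to $P$ and to $P^c$ produces ultrafilters $\F_1,\F_2$ with $g=\mathbf 1_{\F_1}(\cdot\cap P)+\mathbf 1_{\F_2}(\cdot\cap P^c)$. Now $f^{-1}(1)=\{F_1\cup F_2:F_1\in\F_1,F_2\in\F_2\}$ is automatically closed under arbitrary unions (each factor is upward closed), so $\T$ is always a topology: it is Form 2 when both $\F_1,\F_2$ are free, and Form 1 (second option) when one of them is principal, its generating point serving as $a$. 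For $c_0=1$ the symmetric shift $h=1-f$ is again a mass-$2$ nonnegative valuation, splitting $X=P\sqcup P^c$ into two ultrafilters, but this time $f^{-1}(1)=\{A:A\cap P\notin\F_1,\ A\cap P^c\notin\F_2\}$. Union-closure now fails whenever an ultrafilter is free — if $\F_1$ is free then each singleton $\{p\}$ with $p\in P$ is open while their union $P$ has value $0$ — so both must be principal, and the two principal points $a,b$ give $\T=\{\varnothing,X\}\cup\p(X\setminus\{a,b\})$, i.e. Form 3.

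The converse is the routine direction: for each form I would write down an explicit solution of \eqref{eq} as an integer combination of the elementary valuations $A\mapsto[x\in A]$ (a point indicator) and $A\mapsto\mathbf 1_{\F}(A\cap Y)$ (the pullback of an ultrafilter indicator), both of which satisfy \eqref{eq} trivially; for example $f(A)=\mathbf 1_{\F'}(A\cap Y)-[a\in A]$ realises Form 1, $f(A)=\mathbf 1_{\F'}(A\cap A_0)+\mathbf 1_{\F''}(A\cap A_0^c)-1$ realises Form 2, and $f(A)=[a\notin A]+[b\notin A]-1$ realises Form 3, after which surjectivity and the identity $\T=\{\varnothing,X\}\cup f^{-1}(1)$ are immediate. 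The main obstacle is the case $c_0=0$, and within it the single assertion $\bigcup f^{-1}(1)\ne X$: equation \eqref{eq} and surjectivity alone permit the solution $f=\mathbf 1_{\F_1}-\mathbf 1_{\F_2}$ for two distinct free ultrafilters, which is not a topology, and it is exactly the union-closure trick with the cofinite sets $X\setminus\{p\}$ that excludes it and pins down the exceptional point $a$.
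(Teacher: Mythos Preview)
Your argument is correct and in several places tighter than the paper's. Both proofs first force $f(\varnothing)+f(X)=0$ from surjectivity, but you then use the symmetry $f\mapsto -f$ to reduce to $\T=\{\varnothing,X\}\cup f^{-1}(1)$ and split once by $c_0=f(\varnothing)\in\{-1,0,1\}$; the paper keeps both signs in play and splits twice (first by $\{f(\varnothing),f(X)\}$, then by which of $f^{-1}(\pm1)$ is the topology). In your $c_0=0$ case the set $U=\bigcup f^{-1}(1)$ exploits the union axiom directly to locate the exceptional point $a$, whereas the paper picks an arbitrary $Y\in f^{-1}(1)$, quotes Theorem~\ref{thm:two-valued} on $Y$ and on $Y^c$ separately, and then argues that the induced ultrafilter on $Y^c$ must be principal. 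For $c_0=\pm 1$ your affine shifts $g=f+1$ and $h=1-f$ to a nonnegative mass-$2$ valuation give the two-ultrafilter decomposition uniformly; the paper reaches the same conclusion through separate singleton-by-singleton case analyses. On the converse side the paper writes down a piecewise $f$ for each form and verifies \eqref{eq} through a ten-subcase check, while your observation that point indicators $A\mapsto[x\in A]$ and ultrafilter indicators $A\mapsto\mathbf 1_{\F}(A\cap Y)$ are already valuations makes every required $f$ a one-line integer combination. The only omission is that your explicit list skips the second option of Form~1; the formula $f(A)=\mathbf 1_{\F'}(A\setminus\{a\})+[a\in A]-1$ (your own $c_0=-1$ construction with one factor principal at $a$) fills it.
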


\begin{theorem}\label{thm:three-valued-add}
Assume that $X$ has at least four points and let $\{z_1,z_2,z_3\}$ be a set of three different complex numbers. Let $f:\p(X)\setminus\{\varnothing\}\to \{z_1,z_2,z_3\}$ be a surjection satisfying \eqref{eq:additivity} for any $A,B\in\p(X)\setminus\{\varnothing\}$ with $A\cap B=\varnothing$.
Then $\{z_1,z_2,z_3\}=\{-z,0,z\}$ or $\{0,z,2z\}$ for some complex $z\in \mathbb{C}\setminus\{0\}$. Furthermore,
\begin{enumerate}[({T}1)]
\item
for the case of $\{z_1,z_2,z_3\}=\{-z,0,z\}$,  there is a topology $\T$ satisfying $\T=\{\varnothing,X\}\cup f^{-1}(-z)$ or $\T=\{\varnothing,X\}\cup f^{-1}(z)$ if and only if  there exists $p\in X$ and an ultrafilter $\F'$ on $X\setminus\{p\}$ such that $\T=\{\varnothing,X\}\cup \F'$;
\item
for the case of  $\{z_1,z_2,z_3\}=\{0,z,2z\}$, there is a topology $\T$ satisfying $\T=\{\varnothing,X\}\cup f^{-1}(2z)$ if and only if there exist $A\subset X$ and two  ultrafilters $\F'$ and $\F''$ on $A$ and $X\setminus A$, respectively, such that $\T=\{\varnothing,X\}\cup \{F'\cup F'':F'\in \F', F''\in\F''\}$.
\end{enumerate}
\end{theorem}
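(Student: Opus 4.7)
The plan is to handle in sequence (a) the determination of the triple $\{z_1,z_2,z_3\}$ and (b) the classification of the topology $\T$ in each case. For (a), I would let $c:=f(X)$ and exploit the identity $f(A)+f(A^c)=c$ valid for every proper nonempty $A\subset X$, which makes $w\mapsto c-w$ an involution on every $z_i$ that is attained by $f$ on some proper subset. The first task is to rule out the degenerate possibility that $c$ is attained only at $X$: partitioning $X$ into four nonempty cells (using $|X|\ge 4$), pigeonholing their $f$-values among the two remaining elements of the triple, and repeatedly applying disjoint additivity, every sub-case ($4$; $3{+}1$; $2{+}2$; $2{+}1{+}1$) produces fresh values $2u,3u,\ldots$ which, by torsion-freeness of $\mathbb{C}$, cannot all squeeze into a two-element set. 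Hence some proper $A$ realises $f(A)=c$, giving $f(A^c)=0$ and so $0\in\{z_1,z_2,z_3\}$. A short complement check then splits the cases: if $c=0$ the two nonzero values must be negatives of each other, yielding $\{-z,0,z\}$; if $c\neq 0$ then $c$ is forced to equal twice the other nonzero value, yielding $\{0,z,2z\}$ with $f(X)=2z$.

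For (b), I first upgrade \eqref{eq:additivity} to the full valuation identity \eqref{eq}. Setting $f(\varnothing):=0$ and decomposing $A,B$ into the pairwise disjoint cells $A\setminus B$, $A\cap B$, $B\setminus A$, a short calculation yields $f(A)+f(B)=f(A\cup B)+f(A\cap B)$ on all of $\p(X)$. After the affine rescalings $g=f/z$ (case (T1)) and $g=(f-z)/z$ (case (T2)), both landing in $\{-1,0,1\}$, Theorem~\ref{thm:three-valued} forces $\T$ to be one of its three forms. (In (T1) the symmetry $f\leftrightarrow -f$ further lets me assume $\T=\{\varnothing,X\}\cup f^{-1}(z)$.) I then cull the list using the extra strength of additivity over mere valuation: Form~3 is killed because for disjoint singletons $\{a\},\{b\}$ one has $f(\{a\})=f(\{b\})=0$ while $f(\{a,b\})\neq 0$, contradicting \eqref{eq:additivity}.

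The two remaining families are then discriminated by $f(X)$. Form~1's excluded-point variant $\T=\{\varnothing,X\}\cup\F'$ with $\F'$ an ultrafilter on $X\setminus\{p\}$ forces $f(X)=0$ and so matches case (T1) exactly. The included-point variant $\T=\{\varnothing,X\}\cup\{\{p\}\cup F:F\in\F'\}$ and Form~2 both force $f(X)=2z$ (i.e.\ $g(X)=1$) and hence correspond to case (T2). To confirm that the two-ultrafilter description in (T2) subsumes both survivors, I check that when $\F'$ is principal at $p\in A$, the family $\{F'\cup F'':F'\in\F',F''\in\F''\}$ coincides with $\{\{p\}\cup G:G\in\tilde\F\}$ for the ultrafilter $\tilde\F:=\{G\subset X\setminus\{p\}:G\cap(X\setminus A)\in\F''\}$ on $X\setminus\{p\}$; thus every pairing with at least one principal ultrafilter collapses into Form~1's included-point variant, while two free ultrafilters give exactly Form~2.

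For the converse directions I exhibit the witnessing $f$ directly: in (T1), given an ultrafilter $\F'$ on $X\setminus\{p\}$, set $f(B)=z$ on $\F'$, $f(B)=-z$ on the dual family $\{\{p\}\cup C:C\subset X\setminus\{p\},\,C\notin\F'\}$, and $f(B)=0$ otherwise; in (T2), set $f(B)=z\cdot\mathbf{1}[B\cap A\in\F']+z\cdot\mathbf{1}[B\cap(X\setminus A)\in\F'']$. Surjectivity onto the required triple and disjoint additivity then follow from ultrafilter maximality and the fact that two disjoint members of a proper ultrafilter cannot coexist. The main obstacle is step~(a)'s degenerate case $f^{-1}(c)=\{X\}$, which is the only place where the combinatorics of $|X|\ge 4$ is really used and which demands the careful four-piece partition argument above; once $0$ is secured as a value of $f$, everything in (b) and in the converse direction is ultrafilter bookkeeping layered on top of Theorem~\ref{thm:three-valued}.
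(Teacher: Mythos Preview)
Your overall strategy is sound and differs from the paper in two interesting ways. First, for establishing $0\in\{z_1,z_2,z_3\}$, the paper argues via the values of $f$ on singletons (splitting into cases according to $|\{f(\{x\}):x\in X\}|$), whereas you argue that $c=f(X)$ must be attained on a proper subset via a four-cell partition; your route is more conceptual and makes the role of the complement involution $w\mapsto c-w$ transparent, though note that with only two colours available the subcase ``$2{+}1{+}1$'' is vacuous and should be dropped. Second, the paper treats the $\{0,z,2z\}$ case by a direct filter argument (invoking only Theorem~\ref{thm:two-valued-add}), while you upgrade additivity to the full valuation identity, shift by an affine map, and funnel both (T1) and (T2) through Theorem~\ref{thm:three-valued} before culling; this is more uniform, and your explicit witnesses for the converse directions are cleaner than the paper's, which simply defers to Theorem~\ref{thm:three-valued}.

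There is, however, one genuine error in your culling step. Your elimination of Form~3 is misstated: for $\T=\{\varnothing,X\}\cup\p(X\setminus\{a,b\})$ the values $f(\{a\}),f(\{b\}),f(\{a,b\})$ are \emph{not} determined the way you claim. In fact, in case (T2) one computes from $f(X)=f(\{a,b\})+f(X\setminus\{a,b\})$ that $f(\{a,b\})=0$, and then $f(\{a\})=f(\{b\})=0$ as well, so no contradiction arises at $a,b$; in case (T1) one finds $f(\{a,b\})=-z$ with $\{f(\{a\}),f(\{b\})\}=\{0,-z\}$. The correct kill is to take two distinct points $c,d\in X\setminus\{a,b\}$ (available since $|X|\ge4$): then $\{c\},\{d\},\{c,d\}$ all lie in $f^{-1}(w)$ for $w\in\{z,2z\}$, and additivity gives $w+w=w$, forcing $w=0$, a contradiction. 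With this fix in place your discrimination of the remaining forms by the value $f(X)$ goes through as you describe.
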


\section{An elementary proof of Theorem    \ref{thm:classes-connected-door}}
\label{sec:1}

In this section, a direct yet elementary proof of Theorem \ref{thm:classes-connected-door} is given. The idea comes from the following easy observations (which surely reveal more inner
structures of connected door spaces).

\begin{lemma}\label{lem:1}
Let $X$ be a connected door space. Let $A$ and $B$ be two disjoint nonempty subsets of $X$. If $A$ is open and $B$ is closed, then for each $C\subset X-(A\cup B)$ (see Fig.~\ref{fig:1open1closed}), $A\cup C$ is open and $B\cup C$ is closed.
\end{lemma}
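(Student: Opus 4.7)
My plan is to prove the first assertion (that $A\cup C$ is open) by contradiction, and then derive the second ($B\cup C$ closed) by complementation, applying the first assertion with $C$ replaced by $Y\setminus C$, where $Y=X-(A\cup B)$.

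First I would assume for contradiction that $A\cup C$ is not open. By the door property it must then be closed. Easy case analysis rules out $A\cup C=X$ (this would force $B=\varnothing$, contradicting the hypotheses) and $A\cup C=\varnothing$ (since $A$ is nonempty). So $A\cup C$ is a proper nonempty closed set, and in particular its complement $(A\cup C)^c=B\cup(Y\setminus C)$ is open.

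The key auxiliary step is to show that $A\cup B$ must be open. I would apply the door property to $A\cup B$: if it were closed, then using the pairwise disjointness $A\cap B=A\cap C=B\cap C=\varnothing$ (the last two following from $C\subset Y$) one obtains the distributive identity $(A\cup B)\cap(A\cup C)=A$, exhibiting $A$ as the intersection of two closed sets. Since $A$ is also open, this would make $A$ clopen, and connectedness would force $A\in\{\varnothing,X\}$, contradicting that $A$ is nonempty and disjoint from nonempty $B$. So $A\cup B$ is open.

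Now I distribute again: $(A\cup B)\cap(B\cup(Y\setminus C))=B$ by the same disjointness $A\cap Y=B\cap Y=A\cap B=\varnothing$. The left-hand side is the intersection of the two open sets $A\cup B$ and $(A\cup C)^c$, hence open; but $B$ is closed, so $B$ is clopen, and connectedness forces $B\in\{\varnothing,X\}$ --- the desired contradiction. Thus $A\cup C$ is open. For the second assertion, I apply what has just been proved to $C':=Y\setminus C\subset Y$ to see that $A\cup(Y\setminus C)$ is open, whose complement is exactly $B\cup C$, which is therefore closed.

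I expect the main obstacle to be spotting the two distributive identities, namely that $(A\cup B)\cap(A\cup C)$ collapses to $A$ and $(A\cup B)\cap(A\cup C)^c$ collapses to $B$; both rely critically on the pairwise disjointness of $A$, $B$, and $Y$. Once these identities are in hand, the proof reduces to a pair of parallel ``clopen is trivial'' arguments using connectedness.
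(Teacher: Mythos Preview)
Your argument is correct and is essentially the paper's own proof: both hinge on the two identities $(A\cup B)\cap(A\cup C)=A$ and $(A\cup B)\cap(A\cup C)^c=B$ (the paper writes $(A\cup C)^c=B\cup D$ with $D=X\setminus(A\cup B\cup C)$, your $Y\setminus C$), together with the fact that a proper nonempty subset of a connected space cannot be clopen. The only cosmetic difference is that the paper dispatches ``$B\cup C$ closed'' by the word ``similarly'', whereas you obtain it by applying the first assertion to $C'=Y\setminus C$ and taking complements---a small but nice streamlining.
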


\begin{figure}[h!]
\centering
\begin{tikzpicture}[scale=0.7]
 \draw (0,0) circle (3);
 \draw[style=dashed] (-1.3,1.3) circle (1);
 \draw (-2,-2)--(-2,-0.6)--(-0.6,-0.6)--(-0.6,-2)--(-2,-2);
 \draw (1.3,0) circle (1.3);
 \node (2) at (-1.3,1.3) {open};
 \node (4) at (-1.3,-1.3) {closed};
 \node (C) at (1.3,0) {$C$};
\end{tikzpicture}
\caption{If this picture appears in a connected door space (see Lemma \ref{lem:1}), then  the union of $C$ and the closed set is closed, while the union of $C$ and the open set is open.}
\label{fig:1open1closed}
\end{figure}
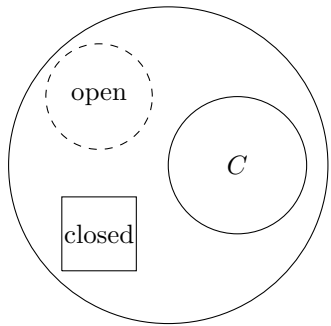

\begin{proof}
Let $D=X-(A\cup B\cup C)$. If $A\cup C$ is closed while $A\cup B$ is open, then $B\cup D=X-(A\cup C)$ is open, and hence $B=(A\cup B)\cap(B\cup D)$ is open, a contradiction to the assumption. If $A\cup C$ and $A\cup B$ are both closed, then $A=(A\cup B)\cap(A\cup C)$ is closed, again a contradiction to the assumption. Therefore, $A\cup C$ must be open. Similarly, $B\cup C$ must be closed.
\end{proof}

\begin{defn}
A topological space $X$ satisfies {\sl OCC} (abbreviation for `the open-closed condition') if there are no pairwise disjoint nonempty subsets $A,B,C$ and $D$ such that $A$ and $B$ are open while $C$ and $D$ are closed.
\end{defn}

\begin{lemma}\label{lem:2}
If $X$ is a connected door space, then $X$ satisfies {\rm OCC}.
\end{lemma}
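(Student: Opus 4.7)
The plan is to argue by contradiction: suppose there are pairwise disjoint nonempty sets $A, B, C, D$ with $A, B$ open and $C, D$ closed, and derive that some proper nonempty subset of $X$ is clopen, violating connectedness.

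The idea is to feed the extra open set $B$ into Lemma \ref{lem:1} twice, using each of the two available closed sets as the ``closed partner'' of the open set $A$. First, I would apply Lemma \ref{lem:1} with $A$ as the open set, $C$ as the closed set, and $B$ as the ``extra'' subset $C$ of the lemma (note $B \subset X \setminus (A\cup C)$ by disjointness), to conclude that $B \cup C$ is closed. Then I would apply Lemma \ref{lem:1} again with $A$ open, $D$ closed, and $B$ as the extra set, obtaining that $B \cup D$ is closed.

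The punchline is an intersection computation: since $C \cap D = \varnothing$, we have
\[
(B \cup C) \cap (B \cup D) = B \cup (C \cap D) = B,
\]
so $B$ is the intersection of two closed sets, hence closed. But $B$ is open by hypothesis, nonempty, and proper in $X$ (since $A$ is disjoint from $B$ and nonempty, so $B \neq X$). This makes $B$ a proper nonempty clopen subset, contradicting the connectedness of $X$.

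There is no real obstacle here; the only thing to be careful about is the bookkeeping when invoking Lemma \ref{lem:1}, in particular verifying that $B$ really does sit inside the complement $X \setminus (A\cup C)$ (respectively $X \setminus (A\cup D)$), which is immediate from the pairwise disjointness hypothesis. The use of connectedness enters only at the very last step, to rule out proper nonempty clopen sets.
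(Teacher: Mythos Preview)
Your proof is correct and follows essentially the same approach as the paper's: two applications of Lemma~\ref{lem:1} to produce a proper nonempty clopen set. The only cosmetic difference is that the paper applies Lemma~\ref{lem:1} with the pairs $(A,D)$ and $(B,C)$ to show directly that $A\cup C$ is both open and closed, whereas you use the pairs $(A,C)$ and $(A,D)$ and then intersect $B\cup C$ with $B\cup D$ to recover $B$ as clopen.
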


\begin{proof}
Suppose there exist four pairwise disjoint nonempty subsets $A,B,C$ and $D$ such that $A$ and $B$ are open while $C$ and $D$ are closed. Applying Lemma~\ref{lem:1}, since $A$ is open and $D$ is closed, we may immediately conclude that $A\cup C$ is open. Applying Lemma~\ref{lem:1} again, since $C$ is closed and $B$ is open, $C\cup A$ must be closed. Thus, the proper nonempty subset $A\cup C$ is both open and closed, which is impossible.
\end{proof}

\begin{figure}[h!]
\centering
\begin{tikzpicture}[scale=0.7]
 \draw (0,0) circle (3);
 \draw[style=dashed] (1.3,1.3) circle (1);
 \draw[style=dashed] (-1.3,1.3) circle (1);
 \draw (2,-2)--(2,-0.6)--(0.6,-0.6)--(0.6,-2)--(2,-2);
 \draw (-2,-2)--(-2,-0.6)--(-0.6,-0.6)--(-0.6,-2)--(-2,-2);
 \node (1) at (1.3,1.3) {open};
 \node (2) at (-1.3,1.3) {open};
 \node (3) at (1.3,-1.3) {closed};
 \node (4) at (-1.3,-1.3) {closed};
\end{tikzpicture}
\caption{This picture cannot appear in a connected door space (see Lemma \ref{lem:2}), i.e., a connected door space does not admit four pairwise
disjoint nonempty subsets with two of them open and the rest closed.}
\label{fig:2open2closed}
\end{figure}
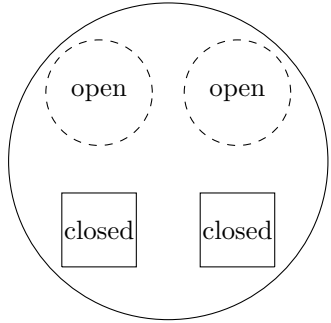

Here comes the proof of Theorem \ref{thm:classes-connected-door}. In fact, we shall prove: if a door space, with more than three elements, satisfies OCC,
then it must be of one of the three types shown in Theorem \ref{thm:classes-connected-door}.

\begin{proof}
Since all three cases presented in Theorem \ref{thm:classes-connected-door} are obviously connected door spaces, we only need to prove the converse.

If $|X|\ge 2$ and $X$ is a connected door space, then it is not discrete and thus there exists at least one closed singleton. Using Lemma \ref{lem:2}, $X$ has at least two open points if and only if it has exactly one closed singleton. Consequently, we only need to consider the following three cases.

\begin{enumerate}[(1)]
\item There is exactly one closed singleton in $(X,\T)$.

Let $\{a\}$ be the only closed singleton, and $a\in A\subsetneqq X$. If $|A^c|=1$, then $A^c$ is open. If $|A^c|>1$, take $b\in A^c$. Then $\{b\}$ is open.
Applying Lemma~\ref{lem:1} to $\{a\},\{b\}$ and $A^c\setminus\{b\}$, we may deduce that $A^c=(A^c\setminus\{b\})\cup\{b\}$ is open. So, in either case,
$A$ is closed. This shows that every subset containing $a$ is closed, which forces every subset without $a$ to be open.

\item There is exactly one open singleton in $(X,\T)$.

Similar to (1), we may deduce in this case that a nonempty subset is open if and only if it takes the open singleton as a subset (i.e., if and only if
it is a {\sl superset} of the open singleton).

\item There is no open singleton in $(X,\T)$, or equivalently, all singletons in $(X,\T)$ are closed, i.e., $(X,\T)$ is $T_1$.
\begin{enumerate}
\item[Claim 1.] The intersection of two nonempty open subsets is still nonempty.

    Suppose there exist two disjoint nonempty open subsets $U$ and $V$. Notice that every finite subset in a $T_1$ space is closed. Thus, $U$ and $V$ are infinite. By taking $u\in U$ and $v\in V$, we obtain four pairwise disjoint subsets $\{u\},\{v\},U\setminus\{u\}$ and $V\setminus\{v\}$, where  $\{u\}$ and $\{v\}$ are closed while $U\setminus\{u\}=U\cap\{u\}^c$ and
    $V\setminus\{v\}=V\cap\{v\}^c$ are open. This is a contradiction to Lemma~\ref{lem:2}, and consequently, Claim 1 holds.
\item[Claim 2.] If $U\in \T\setminus\{\varnothing\}$ and $U\subset V$, then $V\in \T\setminus\{\varnothing\}$.

Otherwise, $V$ is closed but not open, and hence $V\ne X$. This implies $U$ and $V^c$ are two disjoint nonempty open subsets in $X$, which is a contradiction to Claim 1.
\item[Claim 3.] $\bigcap(\T\setminus\{\varnothing\})=\varnothing$.

Since for every $x\in X,~\{x\}$ is closed, so $\{x\}^c\in\T\setminus\{\varnothing\}$. Thus, with $|X|\geq 4$\footnote{In fact, by the argument in the proof of Claim 1, we know that $X$ is infinite.} in mind, we may easily deduce that $\bigcap(\T\setminus\{\varnothing\})\subset\bigcap_{x\in X}\{x\}^c=\varnothing$.
\end{enumerate}

According to these three claims, $\T\setminus\{\varnothing\}$ is apparently a free filter. Since $(X,\T)$ is a connected door space,
for each $A\subset X$, either $A\in\T\setminus\{\varnothing\}$ or $A^c\in\T\setminus\{\varnothing\}$. Therefore, $\T\setminus\{\varnothing\}$ is an ultrafilter, and consequently, a free ultrafilter.
\end{enumerate}
\end{proof}

\begin{remark}\rm
As a generalization of hyperconnectedness\footnote{A topological space is {\sl hyperconnected} if it cannot be written as the union of two proper closed subsets (whether disjoint or non-disjoint). A space with included point topology is hyperconnected.}, many interesting topologies, such as Zariski topology\footnote{The Zariski topology is a topology chosen for algebraic varieties in algebraic geometry \cite{H1977}. The Zariski topology $\T$ on the set $\mathbb{R}$ of real numbers coincides with the finite complement topology (or cofinite topology), i.e., $\T=\{A\subset\mathbb{R}:A^c~\text{is finite or}~A=\varnothing\}$.} and excluded point topology, satisfy OCC.
\end{remark}

\section{Solutions of set equations \eqref{eq:additivity} and \eqref{eq}}
\label{sec:2}

For a nonempty set $X$,  an \emph{algebra} $\sigma(X)$ on $X$ is a family of subsets of $X$ closed under finite union and complement. A sub-family $\F\subset\sigma(X)$ is an \emph{ultrafilter with respect to} $\sigma(X)$ if: for any $U,V\in \F$, $U\cap V\in \F$; for any $U\in\F$ and $A\supset U$ with $A\in\sigma(X)$, $A\in\F$; and for any $U\in\sigma(X)$, either $U$ or $U^c$ belongs to $\F$.

For an algebra $\sigma(X)$ closed under arbitrary union, we say a topology $\T\subset \sigma(X)$ on $X$ is a \emph{connected door topology with respect to} $\sigma(X)$ if $\T$ is a
topology on $X$ such that every proper nonempty subset of $X$ in $\sigma(X)$ is either open or closed but not both.

\begin{lemma}\label{lem:set-0}
Let  $f:\sigma(X)\to\{z_1,z_2\}$ be a surjection satisfying Eq.~\eqref{eq}. Then the following statements hold:

(1) For each $A\in \sigma(X)$, $\{f(A),f(A^c)\}=\{z_1,z_2\}$.

(2) If $Y\in f^{-1}(f(\varnothing))$ and $Y\supset A\in\sigma(X)$, then $A\in f^{-1}(f(\varnothing))$. On the other hand, if $U\in f^{-1}(f(X))$ and $U\subset V\in\sigma(X)$,
then $V\in f^{-1}(f(X))$.

(3) $f^{-1}(z_1)$ and $f^{-1}(z_2)$ are both closed under finite intersection and finite union.
\end{lemma}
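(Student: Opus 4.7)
The plan is to establish (1) first, since it gives the self-dual structure $\{f(A),f(A^c)\}=\{z_1,z_2\}$ that drives everything else; then derive (3) as a pure arithmetic consequence of the valuation; and finally obtain (2) by a disjoint decomposition inside $\sigma(X)$.

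For (1), note that because $\sigma(X)$ is an algebra it contains $\varnothing$ and $X$ and is closed under complement. Applying the valuation to the complementary pair $(A,A^c)$ yields $f(A)+f(A^c)=f(X)+f(\varnothing)=:s$ for every $A\in\sigma(X)$. Since $f$ takes only the values $z_1,z_2$, the left-hand side always lies in $\{2z_1,\,2z_2,\,z_1+z_2\}$. The crucial step is to exclude $s\in\{2z_1,2z_2\}$: if $s=2z_i$, then because $z_1\neq z_2$ the only way to write $2z_i$ as a sum of two elements of $\{z_1,z_2\}$ is $z_i+z_i$, so $f(A)=f(A^c)=z_i$ for every $A$, making $f$ constant and violating surjectivity. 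Hence $s=z_1+z_2$, which immediately gives (1) and, as a by-product, $\{f(X),f(\varnothing)\}=\{z_1,z_2\}$.

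For (3), take $U,V\in f^{-1}(z_i)$ and read the valuation as $2z_i=f(U\cup V)+f(U\cap V)$. Each summand lies in $\{z_1,z_2\}$, and by the same matching-sum argument the only representation of $2z_i$ in this set is $z_i+z_i$; hence both $U\cup V$ and $U\cap V$ lie in $f^{-1}(z_i)$.

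For (2), by (1) we may rename so that $f(\varnothing)=z_2$ and $f(X)=z_1$. Given $A\subseteq Y$ with $Y\in f^{-1}(z_2)$ and $A\in\sigma(X)$, the case $A=Y$ is immediate; otherwise $(A,Y\setminus A)$ is a disjoint pair in $\sigma(X)$, and the valuation reads $f(A)+f(Y\setminus A)=f(Y)+f(\varnothing)=2z_2$, forcing $f(A)=z_2$ as in (3). The dual statement for $f^{-1}(f(X))$ then follows by passing to complements and applying (1). The only delicate point in the whole argument is the surjectivity-based exclusion of $s=2z_i$ in step (1); once that is in place, (2) and (3) are forced purely by arithmetic in $\{z_1,z_2\}$.
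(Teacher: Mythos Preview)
Your proof is correct and follows essentially the same route as the paper's: both establish (1) by applying the valuation to complementary pairs and using surjectivity to rule out $f(A)+f(A^c)=2z_i$, then obtain (2) and (3) by the same ``$2z_i$ forces both summands to be $z_i$'' arithmetic. The only cosmetic differences are that the paper proves (2) before (3), and for the superset half of (2) the paper applies the valuation directly to the pair $(V,\,U\cup V^c)$ rather than deducing it from the subset half via the complement duality in (1); both arguments are equally short.
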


\begin{proof}

(1) If $f(\varnothing)=f(X)=z_i$, then $f(A)+f(A^c)=f(\varnothing)+f(X)=2z_i$ and hence $f(A)=f(A^c)$, $\forall A\in \sigma(X)$, $i=1,2$, which is a contradiction with $f(\sigma(X))=\{z_1,z_2\}$. Thus, $\{f(\varnothing),f(X)\}=\{z_1,z_2\}$. Without loss of generality, we assume that $f(X)=z_1$ and $f(\varnothing)=z_2$. It follows from $f(A)+f(A^c)=f(\varnothing)+f(X)=z_1+z_2$ that $\{f(A),f(A^c)\}=\{z_1,z_2\}$, $\forall A\in \sigma(X)$.

(2) By (1), we may assume $f(\varnothing)=z_2$ and $f(X)=z_1$. For each $Y\in f^{-1}(z_2)$ and each $A\subset Y$, by $f(A)+f(Y\setminus A)=f(\varnothing)+f(Y)=2z_2$, we have $f(A)=f(Y\setminus A)=z_2$, which means that the subsets of a set in $f^{-1}(z_2)$ are still in $f^{-1}(z_2)$.

For any $U\in f^{-1}(z_1)$ and any $V\supset U$, from $f(V)+f(U\cup V^c)=f(U)+f(X)=2z_1$, we get $f(V)=f(U\cup V^c)=z_1$, which implies that the supersets of any set in $f^{-1}(z_1)$ are still in $f^{-1}(z_1)$.

(3) According to {\color{blue}{Eq.~\eqref{eq}}}, $\forall A,B \in f^{-1}(z_i)$, $f(A\cup B)+f(A\cap B)=2z_i$, $i=1,2$. Thus, $f(A\cup B)=f(A\cap B)=z_i$, which means that $A\cup B,A\cap B\in f^{-1}(z_i)$, $i=1,2$.
\end{proof}

\begin{theorem}\label{thm:two-valued}
Suppose $z_1$ and $z_2$ are two distinct complex numbers. Let $\sigma(X)$ be an algebra on $X$. Then the following statements hold:
\begin{enumerate}[({S}1)]
\item
 $\F$ is an ultrafilter with respect to $\sigma(X)$ if and only if there exists a surjection $f:\sigma(X)\to\{z_1,z_2\}$ satisfying Eq.~\eqref{eq} such that $\F= f^{-1}(f(X))$.
\item
If $\sigma(X)$ is also closed under arbitrary union, then  $\T$ is a connected door topology with respect to $\sigma(X)$ if and only if there exists a surjection $f:\sigma(X)\to\{z_1,z_2\}$ satisfying Eq.~\eqref{eq} such that $\T=\{\varnothing\}\cup f^{-1}(f(X))$.
\item
If $\sigma(X)=\p(X)$, then $\T$ is a connected door topology on $X$ if and only if there exists a surjection $f:\p(X)\to\{z_1,z_2\}$ satisfying Eq.~\eqref{eq} such that $\T=\{\varnothing\}\cup f^{-1}(f(X))$.
\end{enumerate}
\end{theorem}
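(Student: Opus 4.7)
The overall plan is to prove the three statements in order, using Lemma~\ref{lem:set-0} throughout. (S1) is the core algebraic correspondence; (S2) is the topological lift; and (S3) is the specialization $\sigma(X)=\p(X)$ of (S2).

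For (S1), the forward direction is an immediate reading of Lemma~\ref{lem:set-0}: part~(1) yields the dichotomy that exactly one of $U, U^c$ lies in $f^{-1}(f(X))$; part~(2) gives upward closure in $\sigma(X)$; and part~(3) gives closure under finite intersection. Together these are precisely the defining axioms of an ultrafilter with respect to $\sigma(X)$. For the reverse direction, given an ultrafilter $\F$, I would set $f(A)=z_1$ if $A\in\F$ and $f(A)=z_2$ otherwise. Surjectivity follows from $X\in\F$ together with $\varnothing\notin\F$ (a proper ultrafilter cannot contain $\varnothing$, since otherwise upward closure would give $\F=\sigma(X)$). Equation~\eqref{eq} is then verified in four cases on whether $A,B\in\F$. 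In each case the ultrafilter axioms determine the memberships of $A\cup B$ and $A\cap B$ (for instance, when $A\in\F$ and $B\notin\F$, upward closure forces $A\cup B\in\F$, while $A^c\cup B^c\supset B^c\in\F$ forces $(A\cap B)^c\in\F$ and hence $A\cap B\notin\F$), so both sides of \eqref{eq} agree.

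The reverse direction of (S2) chains through (S1): $\F:=f^{-1}(f(X))$ is an ultrafilter with respect to $\sigma(X)$, and $\T:=\{\varnothing\}\cup\F$ is then a topology once one checks the arbitrary-union axiom — here the hypothesis that $\sigma(X)$ is closed under arbitrary union combines with upward closure of $\F$ to give the claim. The connected-door property of $\T$ reads off directly from Lemma~\ref{lem:set-0}(1), together with the fact that $z_1 \ne z_2$ rules out a proper subset being both open and closed. For the forward direction I would set $\F=\T\setminus\{\varnothing\}$, verify that $\F$ is an ultrafilter with respect to $\sigma(X)$, and invoke (S1) to recover $f$. Statement (S3) is then the case $\sigma(X)=\p(X)$ of (S2), with the arbitrary-union hypothesis being automatic.

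The step I expect to be the main obstacle is the forward direction of (S2), specifically showing that $\F=\T\setminus\{\varnothing\}$ is closed under finite intersection: two nonempty open sets might in principle meet only in $\varnothing$, which would break the filter axiom. To rule this out I would lean on the "but not both" clause of the connected-door-with-respect-to-$\sigma(X)$ definition and argue in the spirit of Lemmas~\ref{lem:1} and~\ref{lem:2}, showing that a pair of disjoint nonempty opens in $\T$ would, together with any element of their complement, force some proper $\sigma(X)$-measurable subset to be simultaneously open and closed. The case analysis and the careful use of the "not both" hypothesis to exclude such configurations is the technical heart of the proof and is where the connected-door assumption really gets used.
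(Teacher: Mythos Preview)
Your treatment of (S1) and of the implication ``$f$ exists $\Rightarrow$ $\T$ is a connected door topology'' in (S2)/(S3) matches the paper's proof essentially line for line: the paper also reads off the ultrafilter axioms from Lemma~\ref{lem:set-0}, defines $f$ as the indicator of $\F$ for the other half of (S1), and uses Lemma~\ref{lem:set-0}(2) together with (S1) to get the topology. For the remaining implication of (S2) the paper writes only ``The converse is evident'' and gives no argument, whereas you single this out as the crux and propose to show that $\F=\T\setminus\{\varnothing\}$ is an ultrafilter with respect to $\sigma(X)$, the hard step being that two nonempty opens cannot be disjoint.

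That step, however, cannot be carried out as you outline, and this is a genuine gap rather than a technicality. Take $\sigma(X)=\p(X)$ with $|X|\ge 3$ and let $\T$ be the excluded point topology at $a$ (type~(1) in Theorem~\ref{thm:classes-connected-door}). This is a connected door topology, yet for any $b,c\in X\setminus\{a\}$ the singletons $\{b\}$ and $\{c\}$ are disjoint nonempty opens, so $\T\setminus\{\varnothing\}$ is not a filter. Consequently no surjection $f:\p(X)\to\{z_1,z_2\}$ satisfying \eqref{eq} can have $\T=\{\varnothing\}\cup f^{-1}(f(X))$: by Lemma~\ref{lem:set-0}(1),(3) any such $f^{-1}(f(X))$ is closed under intersection and omits $\varnothing$, which $\T\setminus\{\varnothing\}$ is not. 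So the Lemma~\ref{lem:1}/\ref{lem:2} style argument you sketch will not rule out disjoint opens in general (indeed the proof of Theorem~\ref{thm:classes-connected-door} only obtains Claim~1 in the $T_1$ case). In short, you have correctly located the obstacle that the paper's ``evident'' hides; it is an actual defect in the forward direction of (S2)/(S3) as stated, not something your plan can repair.
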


\begin{proof}
\begin{enumerate}[({S}1)]
\item
If $\F$ is an ultrafilter with respect to $\sigma(X)$, we may define $f:\sigma(X)\to \{z_1,z_2\}$ by
$f(A)=z_1$, $\forall A\in\F$; $f(B)=z_2$, $\forall B\in \sigma(X)\setminus\F$. It is easy to check that such an $f$ satisfies \eqref{eq} on $\sigma(X)$.

On the other hand, if $f:\sigma(X)\to \{z_1,z_2\}$ satisfies \eqref{eq}, then for any $A,B\in f^{-1}(f(X))$, Lemma \ref{lem:set-0} (3) leads to $A\cap B\in f^{-1}(f(X))$, and thus $A\cap B\ne\varnothing$ since $f(\varnothing)\ne f(X)$ by Lemma \ref{lem:set-0} (1). Keeping Lemma 3(2) in mind, this implies that $f^{-1}(f(X))$ is a filter. Using Lemma \ref{lem:set-0} (1) again, we have $A\in f^{-1}(f(X))$ or $A^c\in f^{-1}(f(X))$, which means that $f^{-1}(f(X))$ is further an ultrafilter with respect to $\sigma(X)$.
\item
If $\sigma(X)$ is closed under arbitrary union, then  Lemma \ref{lem:set-0} (2) implies that $\T=\{\varnothing\}\cup f^{-1}(f(X))$ is closed under arbitrary union. In consequence, combining (S1) with the knowledge on ultrafilter, $\T$ must be a connected door topology with respect to $\sigma(X)$. The converse is evident.
\item
If $\sigma(X)=\p(X)$, then the connected door space with respect to $\sigma(X)$ reduces to the classical connected door space.
\end{enumerate}
\end{proof}

\begin{proof}[Proof of Theorem \ref{thm:two-valued-add}]
Let $a,b,c\in X$ be distinct. If $0\notin\{z_1,z_2\}$, then $f(\{a\}),f(\{a,b\})$ and $f(\{a,b,c\})$ would be three distinct numbers, which contradicts our hypothesis.
In consequence, $0\in \{z_1,z_2\}$, and we may assume $z_2=0\ne z_1$.

Let $A$ and $B$ be two disjoint nonempty subsets. Then by $f(A\cup B)=f(A)+f(B)$, we have $f(A)=0$ or $f(B)=0$. Since $f$ is a surjection, we can take  $A\in f^{-1}(z_1)$. Then $B\in f^{-1}(0)$ and $A\cup B\in f^{-1}(z_1)$ for any $B\ne \varnothing$ with $B\cap A=\varnothing$. Therefore, $U\in f^{-1}(z_1)$ for any $U\supset A\in f^{-1}(z_1)$.

For $A,B\in f^{-1}(z_1)\setminus \{\varnothing\}$, if $A\cap B=\varnothing$, then $f(A\cup B)=f(A)+f(B)=2z_1\notin \{0,z_1\}$ which is a contradiction. So $A\cap B\ne\varnothing$.
If $A\cap B\in f^{-1}(0)$, then $A\cap B\not\in \{A,B\}$. Hence, $B\setminus A\ne \varnothing$, and thus,
\[f(A\cup B)=f(A)+f(B\setminus A)=f(A)+f(B)-f(B\cap A)=2z_1\notin \{0,z_1\},\]
which is also a contradiction. Accordingly, $A\cap B\in f^{-1}(z_1)$.

Up to now, we have shown that $f^{-1}(z_1)$ is a filter. For any $A\in\p(X)\setminus\{\varnothing,X\}$, $z_1=f(X)=f(A)+f(A^c)$, which deduces that $A\in f^{-1}(z_1)$ or $A^c\in f^{-1}(z_1)$. Thus, $f^{-1}(z_1)$ is an ultrafilter.
\end{proof}

\begin{remark}
All equations in the form of inclusion-exclusion principle are equivalent to \eqref{eq}.

For example, consider the following equation
$$f(A\cup B\cup C)=f(A)+f(B)+f(C)-f(A\cap B)-f(B\cap C)-f(A\cap C)+f(A\cap B\cap C).$$

Taking $C=\varnothing$, we get \eqref{eq}. On the other hand, if \eqref{eq} holds, then
\begin{align*}
f(A\cup B\cup C)&=f(A)+f(B\cup C)-f(A\cap (B\cup C))
\\&=f(A)+f(B)+f(C)-f(B\cap C)-(f(A\cap B)+f(A\cap C)-f(A\cap B\cap C)).
\end{align*}
\end{remark}

For a family $\F\subset \p(X)$, and a subset $A\subset X$, set $\F|_A:=\{F\subset A: F\in \F\}$.

\begin{proof}[Proof of Theorem \ref{thm:three-valued}]
\textbf{Part I}: Let  $f:\p(X)\to \{-1,0,1\}$ be a  surjection satisfying \eqref{eq} such that either $\{\varnothing,X\}\cup f^{-1}(1)$ or $\{\varnothing,X\}\cup f^{-1}(-1)$ is a topology. Now we determine the structures of such topologies.

If $f(\varnothing)=f(X)=1$ (or $-1$), then $f(A)+f(A^c)=2$ (or $-2$), and thus $f(A)=1$ (or $-1$) for any $A\subset X$.  If $\{f(\varnothing),f(X)\}=\{0,1\}$ (or $\{-1,0\}$), then $f(A)+f(A^c)=1$ (or $-1$), and thus $f(A)\in \{0,1\}$ (or $f(A)\in \{0,-1\}$) for any $A\subset X$. These contradict with  $f(\p(X))=\{-1,0,1\}$.

Therefore, there remain only two cases:
\begin{enumerate}
\item[Case 1.] $f(\varnothing)=f(X)=0$.

Without loss of generality, we may suppose $\{\varnothing,X\}\cup f^{-1}(1)$ is a topology on $X$.
Since $f^{-1}(1)\ne\varnothing$, we may pick $Y\in f^{-1}(1)$. Then $Y\not\in\{\varnothing,X\}$.
Eq.~\eqref{eq} gives $f(A)+f(Y\setminus A)=1$, $\forall A\subset Y$. Thus, $f(A)\in \{0,1\}$, and Theorem \ref{thm:two-valued} deduces that $(Y,\{\varnothing\}\cup f^{-1}(1)|_Y)$ is a connected door space. Similarly, for any $B\subset Y^c$, $f(B)\in\{-1,0\}$, $f^{-1}(-1)|_{Y^c}$ is an ultrafilter and $(Y^c,\{\varnothing\}\cup f^{-1}(-1)|_{Y^c})$ is a connected door space.

For any $U\in f^{-1}(1)$, $f(U\cap Y)+f(U\cap Y^c)=f(U)+f(\varnothing)=1$. Therefore, $U\cap Y\in f^{-1}(1)$ and $U\cap Y^c\in f^{-1}(0)$. Since $\{\varnothing,X\}\cup f^{-1}(1)$ is a topology on $X$, it can be verified that both $\{U\subset Y: f(U)=1\}$ and $\{V\subset Y^c: f(V)=0\}{\color{blue}{\cup\{Y^c\}}}$ are closed under arbitrary union. 
{\color{blue}{If $\forall x\in Y^c,~f(\{x\})=0$, then $Y^c$ has to be infinite since $f^{-1}(0)|_{Y^c}$ is closed under finite union. Take $a\neq b\in Y^c$. Then
$f(Y^c)=f(Y^c\setminus\{a\})+f(Y^c\setminus\{b\})-f(Y^c\setminus\{a,b\})=0$, a contradiction. Thus, there must be some $x\in Y^c$ such that $f(\{x\})=-1$.}}
Hence, $f^{-1}(-1)|_{Y^c}$ is a principal ultrafilter. 
In consequence, there exists a unique point $a\in Y^c$ with $f(a)=-1$, which implies $f(X\setminus\{a\})=1$. Accordingly, we may set $Y=X\setminus\{a\}$ and then $f^{-1}(1)|_{X\setminus\{a\}}$ is an ultrafilter on $X\setminus\{a\}$.

\item[Case 2.] $\{f(\varnothing),f(X)\}=\{1,-1\}$.

Without loss of generality, we may assume $f(X)=1$ and $f(\varnothing)=-1$.

\begin{enumerate}[{Subcase} 1.]
\item $\{\varnothing,X\}\cup f^{-1}(-1)$ is a topology on $X$.

If there exists ${\color{blue}{\{a\}}}\in f^{-1}(1)$, then for any $b\in X$ with $b\ne a$, $f(\{a\})+f(\{b\})=f(\varnothing)+f(\{a,b\})$, which implies $f(\{b\})=-1$ and $f(\{a,b\})=1$. Since $\{\varnothing,X\}\cup f^{-1}(-1)$ is a topology on $X$, for any  $B\subset X\setminus\{a\}$, $B=\cup_{b\in B}\{b\}\in f^{-1}(-1)$ and $f(B\cup\{a\})=f(B)+f(\{a\})-f(\varnothing)=1$, which contradicts with $f$ being surjective.

So for any $a\in X$, $f({\color{blue}{\{a\}}})\in\{-1,0\}$. If there exist three different elements $a,b,c\in f^{-1}(0)$, then it can be calculated that $f(\{a,b,c\})=2$ which leads to a  contradiction. If there exsits exactly one element $a\in f^{-1}(0)$, then $b\in f^{-1}(-1)$ for any $b\ne a$, and $X\setminus \{a\}=\cup_{b\in X\setminus \{a\}}\{b\}\in f^{-1}(-1)$. Accordingly, $f(X)=-1+0-(-1)=0$, which is a contradiction. Similarly, if $b\in f^{-1}(-1)$ for any $b\in X$, then $f(X)=-1$, also a contradiction. Thus, there exist exactly two elements $a,b\in f^{-1}(0)$ and for any $c\in X\setminus\{a,b\}$, $c\in f^{-1}(-1)$. Hence, there is no difficulty to check that $f^{-1}(-1)=\p(X\setminus\{a,b\})$.

\item $\{\varnothing,X\}\cup f^{-1}(1)$ is a topology on $X$.

Then for any $U\in f^{-1}(1)$ and $V\supset U$, $1+f(V\setminus U)=-1+f(V)$, which deduces that $V\in f^{-1}(1)$ and $V\setminus U\in f^{-1}(-1)$. By the hypothesis, for any $U,V\in f^{-1}(1)$, $U\cap V\in f^{-1}(1)$, which implies $U\cap V\ne \varnothing$. Thus, $f^{-1}(1)$ is a filter.

Let $A\subset X$ satisfy $f(A)=0$. Then $f(A^c)=0$. So, by Theorem \ref{thm:two-valued} and Theorem \ref{thm:classes-connected-door},  $\{\varnothing\}\cup f^{-1}(0)|_A$ and $\{\varnothing\}\cup f^{-1}(0)|_{A^c}$ are connected door topologies on $A$ and $A^c$, respectively, and $f(B)\in \{0,-1\}$ if $B\subset A$ or $B\subset A^c$.
 Since $f(U\cap A)+f(U\cap A^c)=f(U)+(-1)$, we immediately obtain that $U\in f^{-1}(1)$ if and only if $U\cap A$ and $U\cap A^c$ are both in $f^{-1}(0)$.

If there exists a singleton $\{b\}\in f^{-1}(0)$, then $X\setminus\{b\}\in f^{-1}(0)$, and thus $\{U\subset X\setminus\{b\}:f(U)=0\}$ is closed under arbitrary union and finite intersection. This deduces that
$f^{-1}(1)=\color{blue}{\{B|b\in B,~B\setminus \{b\}\in \F'\}}$ where $\F'$ is an ultrafilter on $X\setminus\{b\}$.

If for any singleton $x$, $f(x)=-1$, then $f^{-1}(0)|_A$ and $f^{-1}(0)|_{A^c}$ are free ultrafilters on $A$ and $A^c$, respectively. So, $f^{-1}(1)=\{U\cup V:U\subset A,V\subset A^c,U,V\in f^{-1}(0)\}$ is the union of two free ultrafilters.
\end{enumerate}
\end{enumerate}

In summary, if $f^{-1}(1)\cup \{\varnothing,X\}$ or $f^{-1}(-1)\cup \{\varnothing,X\}$ is a topology, then the topology must possesses Forms 1, 2 or 3.

\textbf{Part II}: Let $\T$ be a topology on $X$ possessing  Forms 1, 2 or 3. For each case, we shall construct a function $f$ satisfying  \eqref{eq}.

\begin{enumerate}[{Case} 1.]
\item
 $\T=\{\varnothing,X\}\cup \F'$, where $\F'$ is an ultrafilter on $X\setminus\{a\}$.

 Let $f:\p(X)\to \{-1,0,1\}$ be defined as
$$
f(U)=\begin{cases}
1,  & U\in \F', \\
0, & U\subset X\setminus \{a\} \text{ with }U\not\in \F' \text{ or } U =V\cup\{a\} \text{ where }V\in\F', \\
-1, & \text{ otherwise, i.e., }U =W\cup\{a\} \text{ where }W\subset X\setminus \{a\} \text{ and }W\not\in\F'.
\end{cases}
$$
Clearly, $\T=\{\varnothing,X\}\cup f^{-1}(1)$.
One can check that $f$ satisfies \eqref{eq} in detail:
\begin{enumerate}[{Subcase} 1.]
\item $A\in \F'$ and $B\in\F'$ $\Rightarrow$ $A\cap B,A\cup B\in\F'$ $\Rightarrow$ $A,B,A\cap B,A\cup B\in f^{-1}(1)$.

\item $A\in \F'$ and $B\subset X\setminus \{a\}$ with $B\not\in \F'$ $\Rightarrow$ $a\notin A\cap B\notin\F'$ and $A\cup B\in \F'$ $\Rightarrow$ $A,A\cup B\in f^{-1}(1)$ and $B,A\cap B\in f^{-1}(0)$.

\item $A\in \F'$ and $B=V\cup\{a\}$ with $V\in \F'$ $\Rightarrow$ $A\cap B=A\cap V\in \F'$ and $A\cup B=A\cup V\cup\{a\}$ $\Rightarrow$ $A,A\cap B\in f^{-1}(1)$ and $B,A\cup B\in f^{-1}(0)$.

\item $A\in \F'$ and $B=W\cup\{a\}$ with $W\not\in \F'$ and $W\subset X\setminus \{a\}$ $\Rightarrow$ $A\cap B=A\cap W\notin\F'$ and $A\cup B=A\cup W\cup\{a\}$ $\Rightarrow$ $A\in f^{-1}(1)$, $B\in f^{-1}(-1)$ and $A\cap B,A\cup B\in f^{-1}(0)$.

\item $A=V\cup\{a\}$ with $V\in \F'$ and $B\subset X\setminus \{a\}$ with $B\not\in \F'$ $\Rightarrow$ $A\cap B=V\cap B\subset B$, $A\cup B=V\cup B\cup \{a\}$ $\Rightarrow$ $A,A\cup B,B,A\cap B\in f^{-1}(0)$.

\item $A=V\cup\{a\}$ with $V\in \F'$ and $B=U\cup\{a\}$ with $U\in \F'$ $\Rightarrow$ $A\cap B=(V\cap U)\cup\{a\}$, $A\cup B=V\cup U\cup \{a\}$ $\Rightarrow$ $A,A\cup B,B,A\cap B\in f^{-1}(0)$.

\item $A=V\cup\{a\}$ with $V\in \F'$ and $B=W\cup\{a\}$ with $W\subset X\setminus \{a\}$ and $W\not\in \F'$ $\Rightarrow$ $A\cap B=(V\cap W)\cup \{a\}$ and $A\cup B=V\cup W\cup \{a\}$ $\Rightarrow$ $A,A\cup B\in f^{-1}(0)$ and $B,A\cap B\in f^{-1}(-1)$.

\item  $A,B\subset X\setminus \{a\}$ with $A,B\not\in \F'$ $\Rightarrow$ $A\cap B,A\cup B \subset X\setminus \{a\}$ and $A\cap B,A\cup B\not\in \F'$ $\Rightarrow$ $A,A\cup B,B,A\cap B\in f^{-1}(0)$.

\item $A\subset X\setminus \{a\}$ with $A\not\in \F'$  and $B=W\cup\{a\}$ with $W\subset X\setminus \{a\}$ and $W\not\in \F'$ $\Rightarrow$ $A\cap B=A\cap W\notin\F'$ and
$A\cup B=A\cup W\cup \{a\}$ $\Rightarrow$ $A,A\cap B\in f^{-1}(0)$ and $B,A\cup B\in f^{-1}(-1)$.

\item $A=V\cup\{a\}, B=W\cup\{a\}$ with $V,W\subset X\setminus \{a\}$ and $V,W\not\in \F'$ $\Rightarrow$ $A\cap B=(V\cap W)\cup \{a\}$ and $A\cup B=V\cup W\cup \{a\}$  $\Rightarrow$ $A,B,A\cap B,A\cup B\in f^{-1}(-1)$.
\end{enumerate}

\item
 $\T=\{\varnothing,X\}\cup\{\{a\}\cup F:F\in \F'\}$, where $\F'$ is an ultrafilter on $X\setminus\{a\}$.

 Let $f:\p(X)\to \{-1,0,1\}$ be defined as
$$
f(A)=\begin{cases}
1,  & a\in A\text{ and }A\setminus\{a\}\in \F', \\
0, & A\in\F'\text{ or } a\in A \text{ with }A\setminus\{a\}\not\in \F', \\
-1, & \text{ otherwise, i.e., }A\subset X\setminus\{a\}\text{ and }A\not\in\F'.
\end{cases}
$$
Clearly, $\T=\{\varnothing,X\}\cup f^{-1}(1)$ and the same detailed checking shows $f$ is a solution of \eqref{eq}.

\item $\T=\{\varnothing,X\}\cup \{F'\cup F'':F'\in \F', F''\in\F''\}$, where $\F'$ and $\F''$ are two free ultrafilters respectively on sets $Y$ and $X\setminus Y$.

Let $f:\p(X)\to \{-1,0,1\}$ be defined as
$$
f(A)=\begin{cases}
1,  &A= U\cup V \text{ with }U\in \F'\text{ and }V\in \F'', \\
0, & A=U\cup V\text{ with }U\in\F'\text{ and }V\in\p(Y^c)\setminus\F''\text{ or }U\in\p(Y)\setminus\F'\text{ and }V\in\F'', \\
-1, & \text{ otherwise, i.e., }A= U\cup V \text{ with }U\in \p(Y)\setminus\F'\text{ and }V\in \p(Y^c)\setminus\F''.
\end{cases}
$$
Clearly, $\T=\{\varnothing,X\}\cup f^{-1}(1)$ and the same detailed checking shows $f$ is a solution of \eqref{eq}.

\item $\T=\{\varnothing,X\}\cup\p( X\setminus\{a,b\})$, where $a\ne b\in X$.

Let $f:\p(X)\to \{-1,0,1\}$ be defined by
$$
f(A)=\begin{cases}
1,  & A\supset\{a,b\}, \\
0, & a\in A\text{ or }b\in A \text{ but }A\not\supset\{a,b\}, \\
-1, & \text{ otherwise, i.e., }A\subset X\setminus\{a,b\}.
\end{cases}
$$
Clearly, $\T=\{\varnothing,X\}\cup f^{-1}(-1)$ and the same detailed checking shows $f$ is a solution of \eqref{eq}.
\end{enumerate}
\end{proof}

\begin{proof}[Proof of Theorem \ref{thm:three-valued-add}]
First, we prove $0\in\{z_1,z_2,z_3\}$. Suppose the contrary, that $0\notin\{z_1,z_2,z_3\}$. Consider singleton sets in $X$.
\begin{enumerate}[{Case} 1.]
\item $\{f(\{x\}):x\in X\}$ has only one element.

We may assume that $f(\{x\})=z_1$ for each $x\in X$. Then $f(\{x,y\})=2z_1\in\{z_2,z_3\}$, $f(\{x,y,z\})=3z_1\in\{z_2,z_3\}$ and $f(\{x,y,z,w\})=4z_1\in\{z_2,z_3\}$ for pairwise distinct points $x,y,z$ and $w$, which leads to a contradiction with $z_1\ne 0$.

\item $\{f(\{x\}):x\in X\}$ is a set with two elements.

Suppose $f(\{x\})=f(\{y\})=z_1$ and $f(\{z\})=z_2$. Then we have $f(\{x,z\})=z_1+z_2\in \{z_1,z_2,z_3\}$. Since $z_1z_2\ne0$, one has $z_1+z_2=z_3$ and thus $f(\{x,y,z\})=2z_1+z_2=z_1+z_3\in \{z_1,z_2,z_3\}$. The same reason as above gives $z_1+z_3=z_2$. So, $(z_1+z_2)+(z_1+z_3)=z_3+z_2$, which implies $z_1=0$, a contradiction.

\item $\{f(\{x\}):x\in X\}$ has three elements.

Suppose $f(\{x\})=z_1$, $f(\{y\})=z_2$ and $f(\{z\})=z_3$. Then $f(\{x,y\})=z_1+z_2\in \{z_1,z_2,z_3\}$. Since $z_1z_2\ne0$, one has $z_1+z_2=z_3$. Similarly, $z_2+z_3=z_1$ and $z_3+z_1=z_2$. These derive $z_1=z_2=z_3=0$ and thus a contradiction arises.
\end{enumerate}

Hence, we have proved that $\{z_1,z_2,z_3\}$ possesses the form $\{z_1,z_2,0\}$. Now we prove $\{z_1,z_2,0\}$ has the form $\{0,z,2z\}$ or $\{-z,0,z\}$ and complete the proof of Theorem \ref{thm:three-valued-add}.

\begin{enumerate}[{Case} 1.]
\item $z_1+z_2\ne0$.

For any $A\in f^{-1}(z_1)$ and $B\in f^{-1}(z_2)$, if $A\cap B=\varnothing$, then $f(A\cup B)=f(A)+f(B)=z_1+z_2\in \{z_1,z_2,0\}$, which is impossible. So, $A\cap B\ne\varnothing$. Since $A\ne B$, we have $A\setminus B\ne\varnothing$ or $B\setminus A\ne\varnothing$. Without loss of generality, we may assume $B\setminus A\ne\varnothing$. Then $f(B)=f(B\setminus A)+f(A\cap B)$.

If $f(A\cap B)=0$, then $f(B\setminus A)=z_2$. Now $A\in f^{-1}(z_1),~B\setminus A\in f^{-1}(z_2)$, and $A\cap(B\setminus A)=\varnothing$, a contradiction.

If $f(A\cap B)=z_2$, then $f(B\setminus A)=0$ and $A\setminus B\ne\varnothing$. So $f(A)=f(A\setminus B)+f(A\cap B)$ and it  deduces that $f(A\setminus B)=f(A\cap B)=z_2$ and $z_1=2z_2$.

If $f(A\cap B)=z_1$, then similar discussions give   $z_2=2z_1$. So, $\{z_1,z_2,0\}=\{0,z,2z\}$ for some $z\in\mathbb{C}\setminus\{0\}$.

Now we prove $f(X)=2z$. Suppose the contrary, that $f(X)\in\{0,z\}$. Then for $A\in \p(X)\setminus\{\varnothing\}$ with $X\setminus A\ne\varnothing$, we have $f(X)=f(A)+f(X\setminus A)$. This derives $f(A)\in \{0,z\}$ for any $A\in \p(X)\setminus\{\varnothing\}$, which contradicts with that $f$ is a surjection. Therefore, $f(X)=2z$.

Let $A\in f^{-1}(z)$. It then follows from $f(X)=f(A)+f(A^c)$ that $A^c\in f^{-1}(z)$. For each $B\in \p(A)\setminus \{\varnothing,A\}$,  $f(B)+f(A\setminus B)=f(A)=z$, which implies that $B\in f^{-1}(z)$ or $B\in f^{-1}(0)$. The same property holds for $A^c$, i.e., $f(B)\in \{0,z\}$ for any $B\in\p(A^c)\setminus\{\varnothing\}$. According to Theorem \ref{thm:two-valued-add}, $f^{-1}(z)|_{A}$ and  $f^{-1}(z)|_{A^c}$ are two ultrafilters. So, $U\in f^{-1}(2z)$ if and only if $U\cap A\in f^{-1}(z)$ and $U\cap A^c\in f^{-1}(z)$.

\item $z_1+z_2=0$.

In this case, $\{z_1,z_2,0\}=\{-z,0,z\}$ for some $z\in\mathbb{C}\setminus\{0\}$. Let $$g(A)=\begin{cases}
f(A)/z,& \text{ if } A\ne \varnothing,\\
0,&\text{ if } A=\varnothing.
\end{cases}$$
Now we verify that $g(A)+g(B)=g(A\cap B)+g(A\cup B)$ for any $A,B\in \p(X)$.

If $A\cap B=\varnothing$ and $A,B\ne\varnothing$, then $g(A\cup B)=f(A\cup B)/z=f(A)/z+f(B)/z=g(A)+g(B)$.

If $A\cap B=\varnothing$ and $A=\varnothing$, then $g(A\cup B)=g(B)=g(B)+g(A)$. So, we have $g(A\cup B)=g(A)+g(B)$ whenever $A\cap B=\varnothing$.

Consequently, $g(A\cup B)+g(A\cap B)=g(A)+g(B\setminus A)+g(B\cap A)=g(A)+g(B)$ for any $A,B\in\p(X)$, and $g:\p(X)\to \{-1,0,1\}$ is a surjection with $g(\varnothing)=0$.

It is easy to check that $f^{-1}(z)= g^{-1}(1)$ and $f^{-1}(-z)= g^{-1}(-1)$ in virtue of Case 1 in the first part of the proof of Theorem \ref{thm:three-valued}.
Thus, the conclusion of Theorem \ref{thm:three-valued-add} could be verified with the help of Theorem \ref{thm:three-valued}.
\end{enumerate}
\end{proof}

\begin{remark}
A function $f:\p(X)\to \mathbb{R}$ is increasing if $f(A)\le f(B)$ whenever $A\subset B$.
If $f:\p(X)\to \mathbb{R}$ is an increasing function satisfying \eqref{eq}, then $f^{-1}(f(X))$ is a filter.
Here we give a verification:

For any $A,B\in f^{-1}(f(X))$, $f(A\cup B)+f(A\cap B)=f(A)+f(B)=2f(X)$. Since  $f$ is increasing, we have $f(A\cap B)\le f(A\cup B)\le f(X)$, and thus $f(A\cup B)=f(A\cap B)=f(X)$, which derives $A\cap B\in f^{-1}(f(X))$. Further, for any $U\supset A$, $f(U)\ge f(A)=f(X)$, which means $U\in f^{-1}(f(X))$. Therefore, $f^{-1}(f(X))$ is a filter.
\end{remark}

\section*{Acknowledgements}

This work was supported by grants from the National Natural Science Foundation of China
(No. 61772476). Dong Zhang was supported by grant from the project funded by China Postdoctoral
Science Foundation (No. 191170). The authors would like to thank the referee for very helpful suggestions and careful corrections to the previous version of the manuscript.

\end{document}